\DeclareCiteCommand{\cite}[\mkbibemph]
  {\usebibmacro{prenote}}%
  {\usebibmacro{citeindex}%
   \usebibmacro{cite}}
  {\multicitedelim}
  {\usebibmacro{postnote}}
\DeclareMathOperator{\atantwo}{atan2}
\title{Point-to-ellipse Fourier series}
\author{John-Olof Nilsson}
\address{John-Olof Nilsson is with Avioniq Sweden AB, e-mail: john\_nil@hotmail.com}
\date{}
\newtheorem{thm}{Theorem}
\begin{document}

\begin{abstract}
Fourier series with power series coefficients for the normal and distance to a point from an ellipse are derived. These expressions are the first of their kind and opens up a range of analysis and computational possibilities.
\end{abstract}

\maketitle

\vspace{-10mm}

\section{Introduction}

\begin{center}
\includegraphics[]{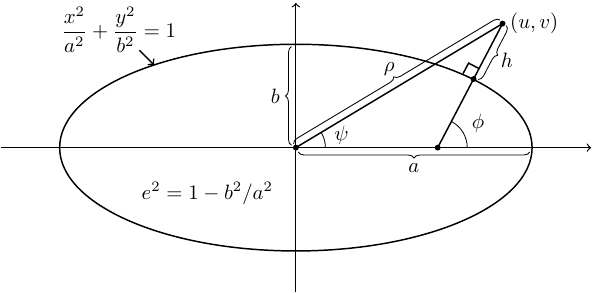}
\end{center}

\noindent
Determining the normal $\phi$ and the distance $h$ to a point $(u,v)$ from an ellipse with axes $a$ and $b$, as depicted above, is an extensively studied problem over the centuries:
Classical geometry techniques for the normal are known since the antiquity p.180~\cite{Heath1896};
Quartic equations
have been around since at least the early 1900-hundreds p.382~\cite{Gibson1911}, 
even if solid solution methods did not appear until recently~\cite{Vermeille2011};
100s of exact and approximate methods for computing $\phi$ and $h$ have been published since the 1960s~\cite{Nilsson2024}.
Despite this, only quartic equation, iterative, and closed-form approximation solutions are available, fundamentally limiting analysis and computations related to $\phi$ and $h$. 
%
In contrast, provided here are
Fourier and power series in $\psi$ and $\sin(\psi)$ for $\phi$, $h$, and $\sin(\phi)$ and $\cos(\phi)$ (vector normal) whose coefficients are, in turn, power series
and polynomials
in $a/\rho$ and $e^2$ with rational coefficients.
The series are valuable in that:
\begin{itemize}
\item They are fundamental results for the ellipse conic and provide the first general series expansions for the point-to-ellipse relation.
\item Point-to-ellipse being a fundamental relation means that they enable series expansions of many related quantities.
\item They are differentiable, making updates for e.g. ellipse fitting, coordinate transformations, and differential equation solutions conceivable.
\item Truncated series give separate simple algebraic approximations with potentially boundable errors and competitive performance.
\end{itemize}
Fourier series expansions have been attempted before by~\cite{Morrison1961} and \cite{Pick1967}, but these efforts have only resulted in a few initial and partial terms.
Series expansion inspiration is also drawn from~\cite{Nilsson2024minimax}.
The main limitation of this work is that no region of convergence is sought.
%
The series are given in the next section. A brief discussion about them follows.
Finally, derivations, tabulated coefficients, and implementations are found in Appendices~\ref{app:A} and~\ref{sec::tabulated_coefficients} and \url{https://github.com/jnil02/point2ellipse_series}, respectively.

\section{The series}
The series are provided below in Fourier multiple-angle and $\sin$-power series forms. The former is fundamental and useful from a theoretical perspective whilst the latter brings more structure by reducing the innermost series to a polynomial. Further, the latter form is also more useful from a computational perspective since truncations directly give polynomials in the ratios $\varrho = a/\rho$, $\sin(\psi) = v / \rho$, and $\cos(\psi) = u /\rho$.

\vspace{3mm}
\begin{thm}
For the point-to-ellipse relation and sufficiently small $ae^2/u$
\vspace{1mm}
\begin{align*}
\phi-\psi &=\sum_{n=1}^\infty\left(\sum_{k=1}^\infty\left(\sum_{l=\max(n,k)}^\infty c^{\phi}_{n,k,l}e^{2l}\right)\varrho^k\right)\sin(2n\psi)\\
\frac{\phi-\psi}{\cos(\psi)\sin(\psi)}&=\sum_{n=0}^{\infty} \left(\sum_{k=1}^\infty\left(\sum_{l=\max(n+1,k)}^{n+k} d^{\phi}_{n,k,l}  e^{2l}\right)\varrho^k\right)\sin^{2n}(\psi)\\[2mm]
\frac{h + a - \rho}{a}
&=\sum_{n=0}^\infty\left(\sum_{k=0}^\infty\left(\sum_{l=\max(n,k+1)}^\infty c^h_{n,k,l}e^{2l}\right)\varrho^k\right)\cos(2n\psi)\\
&=\sum_{n=1}^\infty\left(\sum_{k=0}^\infty\left(\sum_{l=\max(n,k+1)}^{n+k} d^h_{n,k,l}e^{2l}\right)\varrho^k\right)\sin^{2n}(\psi)\\[2mm]
\frac{\sin(\phi)}{\sin(\psi)}-1&=\sum_{n=0}^\infty\left( \sum_{k=1}^\infty\left(\sum_{l=\max(n,k)}^\infty c^{\sin}_{n,k,l}e^{2l}\right)\varrho^k\right)\cos(2n\psi)\\
&=\sum_{n=0}^\infty\left( \sum_{k=1}^\infty\left(\sum_{l=\max(n,k)}^{n+k} d^{\sin}_{n,k,l}e^{2l}\right)\varrho^k\right)\sin^{2n}(\psi)\\[2mm]
\frac{\cos(\phi)}{\cos(\psi)}-1&=\sum_{n=0}^\infty\left( \sum_{k=1}^\infty\left(\sum_{l=\max(n,k)}^\infty c^{\cos}_{n,k,l}e^{2l}\right)\varrho^k\right)\cos(2n\psi)\\
&=\sum_{n=1}^\infty\left( \sum_{k=1}^\infty\left(\sum_{l=\max(n,k)}^{n+k-1} d^{\cos}_{n,k,l}e^{2l}\right)\varrho^k\right)\sin^{2n}(\psi)
\end{align*}
\vspace{1mm}
where the coefficients $c^{*}_{n,k,l}$ and $d^{*}_{n,k,l}$ are rational and, in order, given by the expressions \eqref{eq:c_phi}, \eqref{eq:d_phi}, \eqref{eq:c_h}, \eqref{eq:d_h}, \eqref{eq:c_sin}, \eqref{eq:d_sin}, \eqref{eq:c_cos}, and \eqref{eq:d_cos}, respectively.
\end{thm}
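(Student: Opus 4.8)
The plan is to collapse the geometry to a single scalar equation and invert it as a formal power series while tracking three gradings at once. Parametrise the boundary of the ellipse by the direction $\phi$ of its outward normal. Its support function is $p(\phi)=a\sqrt{1-e^{2}\sin^{2}\phi}$, so the boundary point with normal $\phi$ is $\big(p\cos\phi-p'\sin\phi,\;p\sin\phi+p'\cos\phi\big)$ and the point at signed distance $h$ out along that normal is $\big((p+h)\cos\phi-p'\sin\phi,\;(p+h)\sin\phi+p'\cos\phi\big)$. Equating this with $(u,v)=(\rho\cos\psi,\rho\sin\psi)$ and projecting onto $(\cos\phi,\sin\phi)$ and onto $(-\sin\phi,\cos\phi)$ gives $p+h=\rho\cos(\phi-\psi)$ and $p'=-\rho\sin(\phi-\psi)$. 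Since $p'(\phi)=-ae^{2}\sin\phi\cos\phi/\sqrt{1-e^{2}\sin^{2}\phi}$, the second identity is the single equation to be solved,
\[
\sin(\phi-\psi)=\varrho\,G(\phi),\qquad G(\phi):=\frac{e^{2}\sin\phi\cos\phi}{\sqrt{1-e^{2}\sin^{2}\phi}},
\]
and the first gives $\tfrac{h+a-\rho}{a}=\tfrac{1}{\varrho}\big(\cos(\phi-\psi)-1\big)+1-\sqrt{1-e^{2}\sin^{2}\phi}$; with $\delta:=\phi-\psi$ one also has $\tfrac{\sin\phi}{\sin\psi}-1=(\cos\delta-1)+\cot\psi\,\sin\delta$ and $\tfrac{\cos\phi}{\cos\psi}-1=(\cos\delta-1)-\tan\psi\,\sin\delta$, where the apparent singularities are removable because $G$ vanishes to first order wherever $\sin\phi$ or $\cos\phi$ does.

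\emph{Inversion and existence.} Since $G=O(e^{2})$ and the right-hand side carries an explicit $\varrho$, the map $\delta\mapsto\arcsin\!\big(\varrho\,G(\psi+\delta)\big)$ is a contraction near $\delta=0$ when $ae^{2}/u$ is small; smallness of $ae^{2}/u$ also forces $(u,v)$ to lie strictly outside the evolute, whose extent in the $u$-direction is exactly $ae^{2}$, so the foot of the normal is unique and coincides with this fixed point. The implicit function theorem then gives $\delta=\delta(\psi;\varrho,e^{2})$ analytic in $(\varrho,e^{2})$, and the fixed-point iteration started at $\delta=0$ exhibits $\delta$ as a formal power series obtained by repeatedly substituting $\phi=\psi+\delta$ into $\varrho\,G$. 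Substituting that series into the expressions above and re-expanding produces all eight series.

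\emph{Index ranges.} I would prove the triple-index shape by induction along the iteration, carrying the invariant that every monomial occurring has the form $\varrho^{k}e^{2l}\{\sin,\cos\}(2n\psi)$ with $l\ge k$ and $n\le l$. Both bounds follow from two features of $G$: each new power of $\varrho$ enters together with a factor of $G$ or a $\psi$-derivative of $G$, and every such factor is $O(e^{2})$, which gives $l\ge k$; and the $e^{2m}$-coefficient of $G$ is a rational multiple of $\sin^{2m-1}\phi\cos\phi$, a trigonometric polynomial of angular degree $2m$, while products and compositions add $e^{2}$-orders and angular degrees in lockstep, which gives $n\le l$. Parity --- only $\sin(2n\psi)$ for $\phi-\psi$, only $\cos(2n\psi)$ elsewhere --- follows from $G$ being odd. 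The remaining shifts come from the leading behaviour of each expression: $\cos\delta-1=O(\delta^{2})$ carries $\varrho^{\ge 2}$, so after division by $\varrho$ the $h$-series inherits its $k\ge 0$ range with the pure $\varrho^{0}$ part equal to $1-\sqrt{1-e^{2}\sin^{2}\psi}$, forcing $l\ge 1$ there and hence $l\ge\max(n,k+1)$, and similarly for the $\sin$ and $\cos$ series.

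\emph{$\sin$-power forms and explicit coefficients.} Converting to the $\sin$-power forms uses that $\cos(2n\psi)$ and $\sin(2n\psi)/(\sin\psi\cos\psi)$ are polynomials in $\sin^{2}\psi$ of degrees $n$ and $n-1$, and conversely that $\sin^{2n}\psi$ is a finite cosine sum; the finiteness of the inner $e^{2}$-sums in these forms should be extracted from the fact that every building block is $e^{2}(1-e^{2}\sin^{2}\psi)^{-1/2}$ or a $\psi$-derivative thereof, whose $e^{2l}$-coefficient is a polynomial in $\sin^{2}\psi$ concentrated in a band of width $O(k)$ around $\sin^{2l}\psi$. The explicit rational values of $c^{*}_{n,k,l}$ and $d^{*}_{n,k,l}$ then come from Lagrange inversion / Fa\`a di Bruno (Bell polynomials) applied to $\arcsin\circ G$, the binomial coefficients of $(1-x)^{\mp 1/2}$, and these Chebyshev coefficients, matched term by term against \eqref{eq:c_phi}--\eqref{eq:d_cos}. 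This last step is the main obstacle: proving existence and the lower index bounds is essentially bookkeeping on nesting depth, whereas establishing the finite upper bound on the power of $e^{2}$ in the $\sin$-power forms and pinning down closed forms for all eight coefficient families simultaneously requires carrying the combinatorics cleanly through every substitution.
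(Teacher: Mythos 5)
Your reduction to a single scalar equation is correct and is a genuinely different (and arguably cleaner) starting point than the paper's. The paper works from $\rho\cos\psi=(N+h)\cos\phi$ and $\rho\sin\psi=((1-e^2)N+h)\sin\phi$, divides to get $\tan\phi=\tan\psi+\varrho e^2\sec(\psi)\,\sin\phi\,(1-e^2\sin^2\phi)^{-1/2}$, and applies the Lagrange reversion theorem with $y=\tan\phi$, $f=\tan^{-1}$; your support-function identities $p+h=\rho\cos(\phi-\psi)$ and $\rho\sin(\phi-\psi)=-p'(\phi)$ are equivalent to these (the second is the difference combination, the first is exactly the paper's $h=\rho\cos(\phi-\psi)-a(1-e^2\sin^2\phi)^{1/2}$ from Appendix~\ref{app:h}), and your reductions of $h$, $\sin\phi/\sin\psi$, $\cos\phi/\cos\psi$ to functions of $\delta=\phi-\psi$ mirror the paper's subsequent steps. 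One caveat: $\delta=\arcsin(\varrho G(\psi+\delta))$ is not in the Lagrange-reversion normal form $y=x+d\,g(y)$ because the small quantity sits inside the $\arcsin$; the paper's choice of $\tan\phi$ as the reversion variable is precisely what makes \eqref{eq:lagrange} applicable with an explicit $(\partial/\partial x)^{k-1}$ formula, so if you keep your formulation you are committed to the less structured fixed-point iteration.

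The genuine gap is that the theorem asserts the coefficients are \emph{given by} \eqref{eq:c_phi}--\eqref{eq:d_cos}, and your proposal never produces them --- you concede this yourself in the last sentence. Essentially all of Appendix~\ref{app:A} is devoted to exactly that: closed forms for the $(k-1)$\textsuperscript{st} derivative of $x^k(1+x^2)^{-1}(1+(1-e^2)x^2)^{-k/2}$ via Leibniz and Fa\'a di Bruno, the resummation giving \eqref{eq:c_phi} and \eqref{eq:d_phi}, and then the potential/partial ordinary Bell polynomial machinery for $(\phi-\psi)^i$ that feeds \eqref{eq:d_sin}, \eqref{eq:d_cos}, \eqref{eq:d_h} and, via \eqref{eq:sinPowToCosMul}, the $c$-coefficients. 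Your index-range induction ($l\ge k$ from $G=O(e^2)$, $n\le l$ from the angular degree of the $e^{2m}$-coefficient of $G$) is sound as far as it goes, but the finite upper limits $l\le n+k$ (and $n+k-1$ for $\cos$) in the $\sin$-power forms are only asserted heuristically (``concentrated in a band of width $O(k)$''), whereas the paper extracts them from explicit inequalities such as $i=n-(m+r-q+s-p)\ge 0$ in the collected multi-sum. As written, your argument establishes existence of series of the stated shape (modulo the upper-limit claim) but not the statement actually being proved.
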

\begin{proof}
See Appendix~\ref{app:A}.
\end{proof}
\vspace{3mm}

\noindent
Note, in practice, the region of convergence appear to be of useful size, even including $u=0$ but not too small $\varrho$. Potentially it is outside the ellipse evolute.
Further note, the $\cos(\psi)\sin(\psi)$ in the $\sin$-power series of $\phi-\psi$ could easily be integrated into the series, but this gives a poor rate of convergence, see~\ref{app:phi}. 
%
%
Finally, for computing both $\cos(\phi)$ and $\sin(\phi)$, it is preferable to compute them jointly with slightly different formulas, see~\ref{app:sincos}. For more computational details, see~\cite{Nilsson2024minimax}.

\section{Discussion}
To my knowledge, the series are the first general series expansions of $\phi$, $h$, $\sin(\phi)$, and $\cos(\phi)$ in terms of $\psi$ and $\rho$, or, alternatively, $u$ and $v$. Again, the series are:
\begin{itemize}
\item Essentially series solutions to the quartic latitude equation.
\item Series of fundamental relations, meaning that series for dependent quantities could be derived, enabling related analysis and computations. 
\item Trivially differentiable, both with respect to the point and the parameters, providing new paths for data fitting and transformation updates.
\item Easily approximated by truncation with granularly tuneable accuracy and potentially boundable errors for a given $\rho$. 
\end{itemize}
Hence, the series opens up a range of analysis and computational possibilities not obtainable from previous quartic equation, iterative, or closed-form approximate solutions.
Contributing to this are the $\sin$-power series variants, with its additional series structure. Depending on the use-case, $e^2$ may be a constant, e.g. geodesy, or a variable, e.g. ellipse fitting. 
For the latter case, this is obviously particularly valuable. For the former case, the $\sin$-power series structure is still valuable since $\sin^{2n}(\psi)$ is typically easier to evaluate for different $n$ than $\sin(2n\psi)$ and $\cos(2n\psi)$. 
%
%
Unfortunately, the series come with some disadvantages:
\begin{itemize}
\item They appear not to converge for all points and ellipses and the region of convergence is so far not clear.
\item The series appear to have geometric convergence with respect to $n$ but, due to the inner series, subgeometric convergence with respect to arithmetic operations, i.e. other methods have better asymptotic convergence. 
\end{itemize}
However, convergence can probably be clarified and complementary series in $\varrho^{-1}$ could possibly be derived.
Further, series accelerations may possibly be applied.
Finally, for many practical applications, $e^2$ is small or $\rho$ is large giving fast initial convergence and potentially competitive performance for required accuracies.



\section*{References}
\printbibliography[heading=none]

\appendix
\newrefsegment 

\section{Derivations}\label{app:A}
Let $(u,v)$ be a Cartesian coordinate of a point. Let $a$ and $b$ designate the major and minor axes of an ellipse aligned with the coordinate axes and centered at the origin. Let $h$ and $\phi$ designate the distance to the ellipse and the angle to the normal from the ellipse passing through $(u,v)$. Further, let $\rho=(u^2+v^2)^{1/2}$ and $\psi=\atantwo(v/u)$ be the polar coordinates of $(u,v)$. Then, from basic geometry,
\begin{align}
\label{eq:base1}
\rho\cos(\psi) &= (N + h)\cos(\phi)\\
\label{eq:base2}
\rho\sin(\psi) &= ((1-e^2)N + h)\sin(\phi)
\end{align}
where the eccentricity squared $e^2=1-b^2/a^2$ and the radius of curvature $N = a(1-e^2\sin(\phi))^{-1/2}$.
\eqref{eq:base1} and~\eqref{eq:base2} are the basis for the following series expansions derivations.
Further, in the derivation, the following formulas are used:
\newline
\emph{Lagrange reversion theorem}:
\begin{equation}
\begin{aligned}
\text{with}\quad y = x + d g(y),\quad\text{then for sufficiently small}\:d\\
f(y) = f(x) + \sum_{k=1}^{\infty}\frac{d^k}{k!}\left(\frac{\partial}{\partial x}\right)^{k-1}(f'(x)g^k(x))
\end{aligned}
\label{eq:lagrange}
\end{equation}
The \emph{general Leibniz rule}:
\begin{equation}
\frac{d^n}{dx^n}f(x)g(x) = \sum^n_{r=0}\binom{n}{r}\frac{d^{n-r}}{dx^{n-r}}f(x)\frac{d^r}{dx^r}g(x)\label{eq:leibniz}
\end{equation}
The \emph{Fa\'{a} di Bruno's formula}:
\begin{equation}
\frac{d^n}{dx^n}f(g(x))=\sum\frac{n!}{m_1!\dots m_n!} f^{(m_1+\dots+m_n)}(g(x))\prod_{i=1}^n\left(\frac{g^{(i)}(x)}{i!}\right)^{m_i}\label{eq:faa_di_bruno}
\end{equation}
%
Finally, for all but the $\phi$ expansion, the multi-angle Fourier expansions are obtained from the $\sin$-power expansions and the following relation. Let
\begin{equation*}
f(\phi) = \sum_{n=n_{\min}}^\infty \left(\sum_{k=k_{\min}}^\infty\left(\sum_{l=\max(n,k+k')}^{n+k+k''} e^{2l}d_{n,k,l}\right)\varrho^k\right)\sin^{2n}(\phi)
\end{equation*}
From the power reduction formulas
\begin{align*}
\sin^{2n}(\phi) &= \frac{1}{2^{2n}}\binom{2n}{n} + \frac{2}{2^{2n}}\sum_{i=0}^{n-1}(-1)^{n-i}\binom{2n}{i}\cos(2(n-i)\phi)
\end{align*}
Rearranging the sums gives
\begin{equation}
f(\phi)=\sum^\infty_{n=0}\left(\sum_{k=k_{\min}}^\infty\left(\sum_{l=\max(\max(n,n_{\min}),k+k')}^\infty e^{2l}c_{n,k,l}\right)\varrho^k\right)\cos(2n\phi)\label{eq:sinPowToCosMul}
\end{equation}
where 
\begin{equation*}
c_{n,k,l}=\sum_{i=\max(\max(n,n_{\min}),l-k-k'')}^l d_{i,k,l}\frac{2}{2^{2i+\delta_n}}(-1)^{n}\binom{2i}{i-n}
\end{equation*}
where $\delta_*$ is the \emph{Kronecker delta}.
In the following subsections, expansions are derived for the desirable quantities and and some auxiliary quantities.

\subsection{Expansions for $\phi$}\label{app:phi}
Let $\varrho=a/\rho$. Dividing \eqref{eq:base2} by \eqref{eq:base1} and solving for $\tan(\phi)$
\begin{equation}\label{ref:base3}
\tan(\phi) = \tan(\psi) + \varrho\frac{e^2}{\cos(\psi)}\frac{\sin(\phi)}{(1 - e^2\sin^2(\phi))^{1/2}}
\end{equation}
Using the \emph{Lagrange reversion theorem}~\eqref{eq:lagrange} as in~\cite{Morrison1961} with
\begin{equation*}
y = \tan(\phi),\quad x=\tan(\psi)
\end{equation*}
\begin{equation*}
g(y)=\frac{y}{(1+(1-e^2)y^2)^{1/2}}=\frac{\sin(\phi)}{(1 - e^2\sin^2(\phi))^{1/2}}
\end{equation*}
\begin{equation*}
d = \varrho e^2(1+x^2)^{1/2}=\frac{\varrho e^2}{\cos(\psi)}
\end{equation*}
\begin{equation*}
\text{and}\quad f(y)=\tan^{-1}(y)
\end{equation*}
gives
\begin{equation*}
\tan^{-1}(y) = \tan^{-1}(x) + \sum_{k=1}^{\infty}\varrho^k\frac{e^{2k}(1+x^2)^{k/2}}{k!}\left(\frac{\partial}{\partial x}\right)^{k-1}\frac{x^k}{1+x^2}\frac{1}{(1+(1-e^2)x^2)^{k/2}}
\end{equation*}
To proceed beyond~\cite{Morrison1961}, an expression for the n\textsuperscript{th} derivative of the latter part is needed. Further, series expansions of $h$ and extra structure is enabled by derivation of alternative $\sin$-power series expansions. Finally, this expansion holds for \emph{sufficiently small} $d = \varrho e^2/\cos(\psi) = ae^2/u$.

The n\textsuperscript{th} derivative of $x^k$
\begin{equation*}
\frac{d^n}{dx^n}x^k =
\begin{cases}
\frac{k!}{(k-n)!}x^{k-n} & :n\leq k \\
0 & :n > k
\end{cases}
\end{equation*}
The n\textsuperscript{th} derivative of $\frac{1}{1+x^2}$
\begin{align*}
\frac{d^n}{dx^n}&\frac{1}{1+x^2} = \frac{d^n}{dx^n}\frac{1}{2i}\left(\frac{1}{x-i} - \frac{1}{x+i}\right)\\
&=(-1)^n\frac{n!}{2i}\left(\frac{1}{(x-i)^{n+1}} - \frac{1}{(x+i)^{n+1}}\right)\\
&=(-1)^n\frac{n!}{2i(x^2 + 1)^{n+1}}\left((x+i)^{n+1}-(x-i)^{n+1}\right)\\
&=(-1)^n\frac{n!}{2i(x^2 + 1)^{n+1}}\left(\sum^{n+1}_{k=0}\binom{n+1}{k}x^{n+1-k}i^k - \sum^{n+1}_{k=0}\binom{n+1}{k}x^{n+1-k}(-i)^k\right)\\
&=(-1)^n\frac{n!}{2i(x^2 + 1)^{n+1}}\sum^{n+1}_{k=0}\binom{n+1}{k}x^{n+1-k}i^k\left(1 - (-1)^k\right)\\
&=\frac{n!}{(x^2 + 1)^{n+1}}\sum^{n+1}_{k\in\mathbf{N}_o}\binom{n+1}{k}x^{n+1-k}(-1)^{n+(k-1)/2}\\
&=\sum^{n+1}_{k\in\mathbf{N}_o}(-1)^{n+(k-1)/2}n!\binom{n+1}{k}\frac{x^{n+1-k}}{(x^2 + 1)^{n+1}}\\
&=\sum^{\lfloor n/2\rfloor}_{k=0}(-1)^{n+k}n!\binom{n+1}{2k+1}\frac{x^{n-2k}}{(x^2 + 1)^{n+1}}
\end{align*}
where $\mathbf{N}_o$ designates the odd natural numbers.
Combining the expressions for $x^k$ and $\tfrac{1}{\smash{1+x^{{}_2}}}$ with the \emph{general Leibniz rule}~\eqref{eq:leibniz} gives the n\textsuperscript{th} derivative of $\tfrac{\smash{x^k}}{\smash{1+x^{{}_2}}}$
\begin{align*}
\frac{d^n}{dx^n}\frac{x^k}{1+x^2} &= \sum^n_{m=0}\binom{n}{m}\frac{k!}{(k-(n-m))!}x^{k-(n-m)}\\
&\quad\quad\sum^{\lfloor m/2\rfloor}_{p=0}(-1)^{m+p}m!\binom{m+1}{2p+1}\frac{x^{m-2p}}{(1 + x^2)^{m+1}}\\
& = \sum^n_{m=0}\sum^{\lfloor m/2\rfloor}_{p=0}\binom{m+1}{2p+1}\binom{k}{n-m}n!(-1)^{m+p}\frac{x^{m+1}}{(1 + x^2)^{m+1}}x^{k-n+m-2p-1}
\end{align*}
Required next is the n\textsuperscript{th} derivative of $\frac{1}{(1+(1-e^2)x^2)^{k/2}}$.
Trivially
\begin{equation*}
\frac{d^n}{dx^n}\frac{1}{(1+x)^{k/2}} = (-1)^n(k/2)^{\overline{n}}\frac{1}{(1+x)^{k/2+n}}
\end{equation*}
where $(\cdot)^{\overline{n}}$ indicates the rising factorial. Further
\begin{equation*}
\frac{d^n}{dx^n}(1-e^2)x^2 =
\begin{cases}
0 & \text{if}\quad n > 2 \\
\frac{2!}{(2-n)!}(1-e^2)x^{2-n} &  \text{otherwise}
\end{cases}
\end{equation*}
Plugging this into the \emph{Fa\'{a} di Bruno's formula}~\eqref{eq:faa_di_bruno} where the sum is over all partitions $m_1+2m_2+\dots+nm_n = n$, with $f(x)=\frac{1}{\smash{(1+x)^{{}_{k/2}}}}$ and $g(x)=(1-e^2)x^2$ and noting that the derivative of $g(x)=(1-e^2)x^2$ equals zero for $n>2$ which means that $m_n=0\forall n>2$, $m_1+2m_2=n$ and $m_2\in[0,\lfloor n/2\rfloor]$, giving
\begin{align*}
\frac{d^n}{dx^n}&\frac{1}{(1+(1-e^2)x^2)^{k/2}}\\
&=\sum\frac{n!}{m_1!m_2!} (-1)^{m_1+m_2}(k/2)^{\overline{m_1+m_2}}\frac{(2(1-e^2)x)^{m_1}(1-e^2)^{m_2}}{(1+(1-e^2)x^2)^{k/2+(m_1+m_2)}}\\
&=\sum_{m_2=0}^{\lfloor n/2\rfloor}\frac{n!(k/2)^{\overline{n-m_2}}}{(n-2m_2)!m_2!} (-1)^{n-m_2}\frac{(2(1-e^2)x)^{(n-2m_2)}(1-e^2)^{m_2}}{(1+(1-e^2)x^2)^{k/2+(n-m_2)}}\\
&=\sum_{m=0}^{\lfloor n/2\rfloor}(-1)^{n-m}\frac{n!(k/2)^{\overline{n-m}}2^{(n-2m)}}{(n-2m)!m!}\frac{(1-e^2)^{(n-m)}x^{(n-2m)}}{(1+(1-e^2)x^2)^{k/2+(n-m)}}
\end{align*}
Again, applying the \emph{general Leibniz rule} with the factors $\frac{\smash{x^k}}{\smash{1+x^{{}_2}}}$ and $\frac{1}{\smash{(1 + (1-e^{{}_2})x^{{}_2})^{{}_{k/2}}}}$ and expanding $(1-e^2)^{r-q}$ in a binomial sum gives the k\textsuperscript{th} term of the Lagrange reversion expansion
\begin{align}
&\varrho^k\frac{e^{2k}(1+x^2)^{k/2}}{k!}\left(\frac{\partial}{\partial x}\right)^{k-1}\frac{x^k}{1+x^2}\frac{1}{(1+(1-e^2)x^2)^{k/2}}\nonumber\\
&\quad\quad= \varrho^k\frac{e^{2k}(1+x^2)^{k/2}}{k!}\sum^{k-1}_{r=0}\binom{k-1}{r}\frac{d^{k-1-r}}{dx^{k-1-r}}\frac{x^k}{1+x^2}\frac{d^r}{dx^r}\frac{1}{(1 + (1-e^2)x^2)^{k/2}}\nonumber\\
&\quad\quad=\varrho^k\frac{e^{2k}(1+x^2)^{k/2}}{k!}\sum^{k-1}_{r=0}\binom{k-1}{r}\sum^{k-1-r}_{m=0}\sum^{\lfloor m/2\rfloor}_{p=0}\binom{m+1}{2p+1}\binom{k}{k-1-r-m}\nonumber\\
&\quad\quad\quad\quad\quad\quad(k-1-r)!(-1)^{m+p}\frac{x^{m+1}}{(1 + x^2)^{m+1}}x^{k-(k-1-r)+m-2p-1}\nonumber\\
&\quad\quad\quad\quad\quad\quad\quad\quad\quad\quad\sum_{q=0}^{\lfloor r/2\rfloor}(-1)^{r-q}\frac{r!(k/2)^{\overline{r-q}}2^{(r-2q)}}{(r-2q)!q!}\frac{(1-e^2)^{(r-q)}x^{(r-2q)}}{(1+(1-e^2)x^2)^{k/2+(r-q)}}\nonumber\\
&\quad\quad=\varrho^k\sum^{k-1}_{r=0}\sum^{k-1-r}_{m=0}\sum^{\lfloor m/2\rfloor}_{p=0}\sum_{q=0}^{\lfloor r/2\rfloor}\sum_{t=0}^{r-q}(-1)^{r-q+m+p+t}\frac{(k/2)^{\overline{r-q}}2^{(r-2q)}e^{2(t+k)}}{(r-2q)!q!(1+r+m)}\binom{r-q}{t}\raisetag{-8.75mm}\label{eq:lagrange_tot}\\
&\quad\quad\quad\binom{k-1}{r+m}\binom{m+1}{2p+1}x^{-2p-1}\!\left(\frac{x^2}{1 + x^2}\right)^{m+1-k/2}\!\left(\frac{x}{(1+(1-e^2)x^2)^{1/2}}\right)^{k+2(r-q)}\nonumber
\end{align}
Substitute $\tan(\phi)$ back for $x$, applying binomial expansion of $\frac{1}{(1-e^2\sin^2(\phi))}$, and collecting $\sin$ and $\cos$ factors gives the $x$ factors of~\eqref{eq:lagrange_tot}
\begin{align}
\nonumber
x^{-(2p+1)}&\left(\frac{x^2}{1 + x^2}\right)^{m+1-k/2}\left(\frac{x}{(1+(1-e^2)x^2)^{1/2}}\right)^{k+2(r-q)}\\
\nonumber
&=\tan^{-(2p+1)}(\phi)\sin^{2(m+1-k/2)}(\phi)\frac{\sin^{k+2(r-q)}(\phi)}{(1-e^2\sin^2(\phi))^{(k+2(r-q))/2}}\\
\label{eq:lat_split}
&=\sum_{s=0}^\infty\binom{\frac{k}{2}+r-q+s-1}{s}e^{2s}\cos^{2p+1}(\phi)\sin^{2(m+r-q+s-p)+1}(\phi)
\end{align}
From here the derivations of the Fourier multi-angle and the $\sin$-power series forms split. For the Fourier multi-angle, proceed from~\eqref{eq:lat_split} by applying the trigonometric power-reduction and product-to-sum formulas
\begin{align*}
x^{-(2p+1)}&\left(\frac{x^2}{1 + x^2}\right)^{m+1-k/2}\left(\frac{x}{(1+(1-e^2)x^2)^{1/2}}\right)^{k+2(r-q)}\\
&=\sum_{s=0}^\infty\binom{\frac{k}{2}+r-q+s-1}{s}e^{2s}
\frac{2}{2^{2p+1}}\sum_i^{p}\binom{2p+1}{i}\cos((2p+1-2i)\phi)\\
&\quad\quad\frac{2}{2^{2(m+r-q+s-p)+1}}\sum_j^{m+r-q+s-p}(-1)^{m+r-q+s-p-j}\\
&\quad\quad\binom{2(m+r-q+s-p)+1}{j}\sin((2(m+r-q+s-p)+1-2j)\phi)\\
&=\sum_{s=0}^\infty e^{2s}\sum_i^{p}\sum_j^w\binom{\frac{k}{2}+r-q+s-1}{s}\binom{2p+1}{i}\binom{2w+1}{j}
\frac{(-1)^{w-j}}{2^{2(w+p)+1}}\\
&\quad\quad\big(\sin(2(w-j+p-i+1)\phi) + \sin(2(w-j-(p-i))\phi)\big)
\end{align*}
where $w=m+r-q+s-p$. Inserting back into \eqref{eq:lagrange_tot} gives $\phi-\psi$ in terms of $\sin$-multiples
\begin{align*}
\phi-\psi = &\sum_{k=1}^\infty\sum_{s=0}^\infty\sum^{k-1}_{r=0}\sum^{k-1-r}_{m=0}\sum^{\lfloor m/2\rfloor}_{p=0}\sum_{q=0}^{\lfloor r/2\rfloor}\!\sum_{t=0}^{r-q}\sum_{i=0}^{p}\sum_{j=0}^{w}\varrho^k\frac{(-1)^{s-j+t}(k/2)^{\overline{r-q}}}{q!(r\!-\!2q)!(m\!+\!1\!+\!r)}\frac{e^{2(k+s+t)}}{2^{2(m+s)+r+1}}\\
&\binom{r-q}{t}\binom{k-1}{m+r}\binom{m+1}{2p+1}\binom{\frac{k}{2}+r-q+s-1}{s}\binom{2p+1}{i}\binom{2w+1}{j}\\
&\big(\sin(2(w-j+p-i+1)\phi) + \sin(2(w-j-(p-i))\phi)\big)
\end{align*}
To collect equal terms of $\sin(2n\psi)$, $e^{2l}$ and $\varrho^{k}$, note
\begin{itemize}
\item $w-j+p-i+1 \neq w-j-(p-i)$ so there is no $n=0$ term.
\item $l=s+k+t$ meaning that $s=l-k-t$.
\item $t$ is limited by $l$ since obviously $t+k\leq l$.
\item $w-j+p-i+1\in [1, k+s]$ meaning the first term can only contributes with a positive angle.
\item $w-j+p-i+1=n$ imply $j=w+p-i+1-n$ which, together with $j\in[0,w]$ and $i\in[0,p]$, imply $i\in[\max(0,p-n+1), \min(p, w+p-n+1)]$.
\item $w-j-(p-i)\in [-\lceil (k-1)/2\rceil, k+s-1]$ so the second term can contribute with a positive and a negative term.
\item $w-j-(p-i)=n$ imply $j = w-p+i-n$ which, together with $j\in[0,w]$ and $i\in[0,p]$, imply $i\in[p-w+n, p]$.
\item $w-j-(p-i)=-n$ imply $j = w-p+i+n$ which, together with $j\in[0,w]$ and $i\in[0,p]$, imply $i\in[p-w-n, p-n]$.
\item $l$ is obviously below limited by $k$, but also by $n$, since for a given $n$, $l\geq k+s\geq n$.
\end{itemize}
%
giving
\begin{align*}
\phi-\psi &= \sum_{n=1}^\infty\left(\sum_{k=1}^\infty\left(\sum_{l=\max(n,k)}^\infty c^\phi_{n,k,l}e^{2l}\right)\varrho^k\right)\sin(2n\phi)
\end{align*}
where the coefficients are given by
\begin{align}
c^\phi_{n,k,l}&=\sum^{k-1}_{r=0}\sum^{k-1-r}_{m=0}\sum^{\lfloor m/2\rfloor}_{p=0}\sum_{q=0}^{\lfloor r/2\rfloor}\sum_{t=0}^{\min(l-k,r-q)}\frac{(-1)^{l-k}(k/2)^{\overline{r-q}}}{q!(r-2q)!(m+1+r)2^{2(m+l-k-t)+r+1}}\nonumber\\
&\quad\quad\quad\quad\binom{r-q}{t}\binom{k-1}{m+r}\binom{m+1}{2p+1}\binom{\frac{k}{2}+r-q+l-k-t-1}{l-k-t}\nonumber\\
&\left(\sum_{i=\max(0,p-n+1)}^{\min(p, p-n+1+w)}(-1)^{w+p-i+1-n}\binom{2p+1}{i}\binom{2w+1}{w+p-i+1-n}\right.\nonumber\\
&\quad\quad\quad\quad\quad+\sum_{i=p-w+n}^{p}(-1)^{w-p+i-n}\binom{2p+1}{i}\binom{2w+1}{w-p+i-n}\nonumber\\
&\left.\quad\quad\quad\quad\quad\quad\quad\quad-\quad \sum_{i=p-w-n}^{p-n}(-1)^{w-p+i+n}\binom{2p+1}{i}\binom{2w+1}{w-p+i+n}\right)\label{eq:c_phi}
\end{align}
%
In turn, the $\sin$-power series form is found by proceeding from~\eqref{eq:lat_split} as follows.
\begin{align*}
&x^{-(2p+1)}\left(\frac{x^2}{1 + x^2}\right)^{m+1-k/2}\left(\frac{x}{(1+(1-e^2)x^2)^{1/2}}\right)^{k+2(r-q)}\\
&=\cos(\phi)\sum_{s=0}^\infty\binom{\frac{k}{2}+r-q+s-1}{s}e^{2s}(1-\sin^{2}(\phi))^p\sin^{2(m+r-q+s-p)+1}(\phi)\\
&=\cos(\phi)\sum_{s=0}^\infty\binom{\frac{k}{2}+r-q+s-1}{s}e^{2s}\sum_{i=0}^{p}\binom{p}{i}(-1)^i\sin^{2i}(\phi)\sin^{2(m+r-q+s-p)+1}(\phi)\\
&=\cos(\phi)\sum_{s=0}^\infty e^{2s}\sum_{i=0}^{p}(-1)^i\binom{\frac{k}{2}+r-q+s-1}{s}\binom{p}{i}\sin^{2(m+r-q+s-p+i)+1}(\phi)\\
\end{align*}
Inserting back into~\eqref{eq:lagrange_tot} gives $\phi-\psi$ in terms of $\sin$-powers
\begin{align*}
\phi-\psi &= \cos(\phi)\sum_{k=1}^\infty\sum_{s=0}^\infty\sum^{k-1}_{r=0}\sum^{k-1-r}_{m=0}\sum^{\lfloor m/2\rfloor}_{p=0}\sum_{q=0}^{\lfloor r/2\rfloor}\sum_{t=0}^{r-q}\sum_{i=0}^{p}\varrho^k e^{2(k+s+t)}\\
&\quad\binom{r-q}{t}\binom{k-1}{m+r}\binom{m+1}{2p+1}\binom{\frac{k}{2}+r-q+s-1}{s}\binom{p}{i}\\
&\quad\frac{(-1)^{m+r-q+p+t+i}(k/2)^{\overline{r-q}}2^{r-2q}}{q!(r-2q)!(m+1+r)}\sin^{2(m+r-q+s-p+i)+1}(\phi)\\
\end{align*}
To collect equal terms of $\sin^{2n+1}(\psi)$, $e^{2l}$ and $\varrho^{k}$, note
\begin{itemize}
\item $l=s+k+t$ meaning that $s=l-k-t$.
\item $t$ is limited by $l$ and $k$ since obviously $t+k\leq l$.
\item $l$ is obviously below limited by $k$.
\item For a given $n$, from the $\sin$ factor $i = n - (m+r-q+s-p)$
\item For a given $n$ and $k$, $l$ is above limited since $n - (m+r-q+s-p) = i \geq 0$ implying $l \leq n+k$.
\item For a given $k$, $l$ is below limited since $p \geq i = n - (m+r-q+s-p)$ implying $l \geq n+1$.
\end{itemize}
giving
\begin{equation}\label{eq:latSinPow}
\frac{\phi-\psi}{\cos(\psi)\sin(\psi)} =\sum_{n=0}^{\infty} \left(\sum_{k=1}^\infty\left(\sum_{l=\max(n+1,k)}^{n+k} d^\phi_{n,k,l} e^{2l}\right)\varrho^k\right)\sin^{2n}(\psi)
\end{equation}
where
\begin{align}
d^\phi_{n,k,l}&=\sum^{k-1}_{r=0}\sum^{k-1-r}_{m=0}\sum^{\lfloor m/2\rfloor}_{p=0}\sum_{q=0}^{\lfloor r/2\rfloor}\sum_{t=0}^{\min(l-k,r-q)}\frac{(-1)^{2(p+t)+n-l+k}(k/2)^{\overline{r-q}}2^{r-2q}}{q!(r-2q)!(m+1+r)}\nonumber\\
&\quad\binom{r\!-\!q}{t}\!\binom{k\!-\!1}{m\!+\!r}\!\binom{m\!+\!1}{2p\!+\!1}\!\binom{\frac{k}{2}\!+\!r\!-\!q\!+\!l\!-\!k\!-\!t\!-\!1}{l-k-t}\!\binom{p}{n\!-\!m\!-\!r\!+\!q\!-\!l\!+\!k\!+\!t\!+\!p}\raisetag{-7mm}\label{eq:d_phi}
\end{align}
Note, $\cos(\psi)$ in \eqref{eq:latSinPow} (an obiously $\sin(\psi)$) can easily be integrated into the series, by replacing the lower limit of $l$ with $k$ rather than $\max(n+1,k)$ and replacing $p$ with $p+\frac{1}{2}$ in the last binomial of \eqref{eq:d_phi}. However, this gives a series whose convergence is limited by the expansion of $\cos(\psi)$. It is typically better to use $\cos(\psi)=(1-\smash{\sin}^2(\psi))^{1/2}$ and, therefore, such a series is not further explored.

\subsection{Expansions for $(\phi-\psi)^i$}
For brevity, let
\begin{gather*}
a_{n,k}=\sum_{l=\max(n+1,k)}^{n+k} d^\phi_{n,k,l} e^{2l}\quad\text{and}\quad
a_n =\sum_{k=1}^\infty a_{n,k}\varrho^k
\end{gather*}
Then, from~\eqref{eq:latSinPow}
\begin{align*}
(\phi-\psi)^i&=\cos^i(\psi)\sin^i(\psi)\left(\sum_{n=0}^\infty a_n\sin^{2n}(\psi)\right)^i\\
 &=\cos^i(\psi)\sin^i\sum_{n=0}^\infty b_{n,i}\sin^{2n}(\psi)
\end{align*}
where $b_{n,i}$ is a polynomial in $a_0,\dots,a_n$. The recurrence formula of powers of power series with non-zero constant term~\cite{Gould1974}, implicitly defines the \emph{ordinary potential polynomials}
\begin{align*}
b_{n,i} &=A_{n,i}(a_0,\dots,a_n) \\
&=
\begin{cases}
{a_0}^i& n=0\\
 \frac{1}{na_0}\sum_{k=1}^n(ki-n+k){a_k}b_{n-k,i}& n> 0
\end{cases}
\end{align*}
Further, since the series of $a_l$ has a zero constant term, its powers~\cite{Taghavian2023}
\begin{align*}
a_l^{i_l}=
\begin{cases}
1 & i_l = 0 \\
\sum_{k=i_l}^\infty \hat{B}_{k,i_l}(a_{l,1},\dots,a_{l,k-i_l+1})\varrho^k & i_l>0
\end{cases}
\end{align*}
where $\hat{B}_{k,i_l}(a_{l,1},\dots,a_{l,k-i_l+1})$ are the \emph{partial ordinary Bell polynomials} with a similar recurrence relation
\begin{equation*}
\hat{B}_{k,i}(x_1,\dots,x_{k-i+1})=
\begin{cases}
\delta_k & i = 0 \\
\sum_{j=1}^{k-i+1}x_j\hat{B}_{k-j,i-1}(x_1,\dots,x_{k-j-i}) & k \geq i, i \geq 1
\end{cases}
\end{equation*}
Finally, to collect equal terms of $e^{2l}$ and $\varrho^{k}$ in $b_{n,i}$, note
\begin{itemize}
\item For a given $i$, since $k$ starts at 1, the lowest power of $\varrho$ is $\varrho^i$.
\item For a given $k$, for each $a_{n,k}$ $l\geq k$ meaning that the power $e^{2l}$ is lower bounded by $k$.
\item For a given $n$, the lowest power of $e^2$ comes from multiplying the lowest powers in $a_l$. Writing $b_{n,i}$ as
\begin{equation*}
A_{n,i}(a_0,\dots,a_n)=b_{n,i}=\sum_{\substack{i_0+\dots+i_n=i \\ i_1+2i_2+\dots+ni_n=n}}\binom{i}{i_0,\dots,i_n}\prod_{l=0}^{i}a_l^{i_l}
\end{equation*}
and replacing each $a_l$ with the lowest power $e^{2(n+1)}$ gives
\begin{equation*}
\prod_{l=0}^{i}a_l^{i_l}\sim e^{2(i_0 + 2i_1+3i_2 + \dots+(n+1)i_n)}=e^{2(n+i)}
\end{equation*}
meaning that the power of $e^{2l}$ is lower bounded by $n+i$. 
\item For a given $n$ and $k$, the highest power of $e^2$ is trivially bounded by $n+k$.
\end{itemize}
Consequently
\begin{equation*}
b_{n,i} = \sum_{k=i}^\infty\left(\sum_{l=\max(n+i,k)}^{n+k} c^\phi_{n,k,l,i}e^{2l}\right)\varrho^k
\end{equation*}
where
\begin{align*}
d^\phi_{n,k,l,i}&=[e^{2l}][\varrho^k]\left(A_ {n,i}(a_0,\dots,a_n)\left[a_n^{i_n}\rightarrow\sum_{k=i_n}^\infty \hat{B}_{k,i_n}(a_{n,1},\dots,a_{n,k-i_n+1})\varrho^k\right]\right)\\
&=[e^{2l}][\varrho^k]\left(\rule{0cm}{7mm}A_ {n,i}(a_0,\dots,a_n)\left[\rule{0cm}{7mm}a_n^{i_n}\rightarrow\right.\right.\\
&\!\!\!\!\left.\left.\sum_{k=i_n}^\infty \hat{B}_{k,i_n}\left(\sum_{l=\max(1,n+1)}^{n+1} d^\phi_{n,1,l} e^{2l},\dots,\sum_{l=\max(n+1,k-i_n+1)}^{n+k-i_n+1} d^\phi_{n,k-i_n+1,l} e^{2l}\right)\varrho^k\right]\right)
\end{align*}
where $[x^k]f(x)$ means the coefficient of $x^k$ in the series or polynomial $f(x)$ and $f(x)[x\rightarrow y]$ means the series or polynomial $f(x)$ with $x$ substituted with $y$. Note, the substitution of $a_n^{i_n}$ is done for all $n$ and $i_n$. Further, note, in determining $[x^k]f(x)$ in the coefficient expression above, the polynomials of the series have to be expanded with Cauchy products. 
Combining it all gives
\begin{align}\label{eq:phiDiffPow}
(\phi-\psi)^i
 &=\cos^i(\psi)\sin^i\sum_{n=0}^\infty \left(\sum_{k=i}^\infty\left(\sum_{l=\max(n+i,k)}^{n+k} d^\phi_{n,k,l,i}e^{2l}\right)\varrho^k\right)\sin^{2n}(\psi)
\end{align}

\subsection{Expansions for $\sin(\phi)$ and $\cos(\phi)$}\label{app:sincos}
Expanding $\sin(\phi)$ in a Taylor series around $\psi$, using the $\sin$-power series expansion for $(\phi-\psi)^i$ from \eqref{eq:phiDiffPow} and expanding $\cos^{2x}(\psi)$ in $\sin^2(\psi)$ gives
\begin{align}
\nonumber\frac{\sin(\phi)}{\sin(\psi)}&=\sum_{i=0}^\infty\frac{\partial}{\partial\varphi^i}\left.\frac{\sin(\varphi)}{i!}\right|_{\psi}\frac{1}{\sin(\psi)}(\phi-\psi)^i\\
\label{eq:sin_split}\raisetag{-7.5mm}&=\sum_{i\in\mathbf{N}_e}^\infty\frac{(-1)^{i/2}}{i!}(\phi-\psi)^i+\sum_{i\in\mathbf{N}_o}^\infty\frac{(-1)^{(i-1)/2}}{i!}\frac{\cos(\psi)}{\sin(\psi)}(\phi-\psi)^i\\
\nonumber&=1+\sum_{i=1}^\infty\frac{(-1)^{\lfloor i/2\rfloor}}{i!}\cos^{2\lceil i/2\rceil}(\psi)\sin^{2\lfloor i/2\rfloor}\sum_{n=0}^\infty b_{n,i}\sin^{2n}(\psi)\\
\nonumber&=1+\sum_{i=1}^\infty\sum_{j=0}^{\lceil i/2\rceil}\binom{\lceil i/2\rceil}{j}\frac{(-1)^{\lfloor i/2\rfloor+j}}{i!}\sum_{n=0}^\infty b_{n,i}\sin^{2(n+\lfloor i/2\rfloor+j)}(\psi)\\
\nonumber&\!\!\!\!\!\!\!\!\!\!\!\!\!\!\!\!\!\!=1\!+\!\sum_{i=1}^\infty\!\sum_{j=0}^{\lceil i/2\rceil}\!\!\binom{\lceil i/2\rceil}{j}\frac{(-1)^{\lfloor i/2\rfloor\!+\!j}}{i!}\!\sum_{n=0}^\infty\! \left(\!\sum_{k=i}^\infty\!\left(\sum_{l=\max(n+i,k)}^{n+k}\!\!\!\!\!\!\!\!\! e^{2l}d^\phi_{n,k,l,i}\!\right)\!\varrho^k\!\right)\!\sin^{2(n\!+\!\lfloor i/2\rfloor\!+\!j)}(\psi)\\
\label{eq:sin}\raisetag{-9mm}&=1+\sum_{n=0}^\infty \left(\sum_{k=1}^\infty\left(\sum_{l=\max(n,k)}^{n+k} d^{\sin}_{n,k,l}e^{2l}\right)\varrho^k\right)\sin^{2n}(\psi)
\end{align}
where $\mathbf{N}_e$ and $\mathbf{N}_o$ designate the even and odd natural numbers and where
\begin{align}
\nonumber &d^{\sin}_{n',k',l'}\\
\nonumber &=\sum_{i=1}^\infty\!\sum_{j=0}^{\lceil i/2\rceil}\!\binom{\lceil i/2\rceil}{j}\frac{(-1)^{\lfloor i/2\rfloor\!+\!j}}{i!}\!\sum_{n=0}^\infty\! \left(\sum_{k=i}^\infty\!\left(\sum_{l=\max(n+i,k)}^{n+k}\!\!\!\!\! \delta_{l-l'}d^\phi_{n,k,l,i}\!\right)\!\delta_{k-k'}\!\right)\!\delta_{n\!+\!\lfloor i/2\rfloor\!+\!j-n'}\\
\raisetag{-9mm}&\quad\quad\quad=\sum_{i=1}^{\min(k',2n'+1)}\sum_{j=\max(0,\lceil i/2\rceil-l'+n')}^{\min(\lceil i/2\rceil,n'-\lfloor i/2\rfloor)}\binom{\lceil i/2\rceil}{j}\frac{(-1)^{\lfloor i/2\rfloor+j}}{i!} d^\phi_{n'-\lfloor i/2\rfloor-j,k',l',i}\label{eq:d_sin}
\end{align}
where 
the summation limits are due to
\begin{itemize}
\item $\delta_{n+\lfloor i/2\rfloor+j-n'}$, $i\geq 1$, $j\geq 0$ and $n\geq 0$ imply
\begin{itemize}
\item $n'\geq 0$
\item $i\leq 2n'+1$
\item $j\leq n'-\lfloor i/2\rfloor$
\item $n = n'-\lfloor i/2\rfloor-j$
\end{itemize}
\item $\delta_{k-k'}$, $i\geq 1$ and $k\geq i$ imply
\begin{itemize}
\item $k'=k$
\item $k'\geq 1$
\item $i\leq k'$
\end{itemize}
\item $\delta_{l-l'}$ and $\max(n+i,k)\leq l\leq n+k$, imply
\begin{itemize}
\item $l'=l$
\item $i\leq l'-n= l'-n'+\lfloor i/2\rfloor+j$ meaning $\lceil i/2\rceil-l'+n'\leq j$
\item $\max(n',k')\leq l'\leq n'+k'$
\end{itemize}
\end{itemize}
Using~\eqref{eq:sinPowToCosMul}, the Fourier series follows as
\begin{equation}
\frac{\sin(\phi)}{\sin(\psi)}-1=\sum_{n=0}^\infty\left( \sum_{k=1}^\infty\left(\sum_{l=\max(n,k)}^\infty c^{\sin}_{n,k,l}e^{2l}\right)\varrho^k\right)\cos(2n\psi)
\end{equation}
where
\begin{equation}
c^{\sin}_{n,k,l}=\sum_{i=\max(n,l-k)}^l d^{\sin}_{i,k,l}\frac{2}{2^{2i+\delta_n}}(-1)^{n}\binom{2i}{i-n}\label{eq:c_sin}
\end{equation}

The derivation for $\cos(\phi)$ is identical with $\lfloor\cdot\rfloor$ changed to $\lceil\cdot\rceil$ and \emph{vice versa}, i.e.
\begin{align}
\nonumber\frac{\cos(\phi)}{\cos(\psi)}&=\sum_{i=0}^\infty\frac{\partial}{\partial\varphi^i}\left.\frac{\cos(\varphi)}{i!}\right|_{\psi}\frac{1}{\cos(\psi)}(\phi-\psi)^i\\
\label{eq:cos_split}&=\sum_{i\in\mathbf{N}_e}^\infty\frac{(-1)^{i/2}}{i!}(\phi-\psi)^i-\frac{\sin^2(\psi)}{\cos^2(\psi)}\sum_{i\in\mathbf{N}_o}^\infty\frac{(-1)^{(i-1)/2}}{i!}\frac{\cos(\psi)}{\sin(\psi)}(\phi-\psi)^i\\
\label{eq:cos}&=1+\sum_{n=0}^\infty \left(\sum_{k=1}^\infty\left(\sum_{l=\max(n,k)}^{n+k-1} d^{\cos}_{n,k,l}e^{2l}\right)\varrho^k\right)\sin^{2n}(\psi)
\end{align}
where
\begin{equation}
d^{\cos}_{n',k',l'}=\sum_{i=1}^{\min(k',2n'+1)}\sum_{j=\max(0,\lfloor i/2\rfloor-l'+n')}^{\min(\lfloor i/2\rfloor,n'-\lceil i/2\rceil)}\binom{\lfloor i/2\rfloor}{j}\frac{(-1)^{\lceil i/2\rceil+j}}{i!} d^\phi_{n'-\lceil i/2\rceil-j,k',l',i}\label{eq:d_cos}
\end{equation}
and
\begin{equation}
\frac{\cos(\phi)}{\cos(\psi)}-1=\sum_{n=0}^\infty\left( \sum_{k=1}^\infty\left(\sum_{l=\max(n,k)}^\infty c^{\cos}_{n,k,l}e^{2l}\right)\varrho^k\right)\cos(2n\psi)
\end{equation}
where
\begin{equation}
c^{\cos}_{n,k,l}=\sum_{i=\max(n,l-k+1)}^l d^{\cos}_{i,k,l}\frac{2}{2^{2i+\delta_n}}(-1)^{n}\binom{2i}{i-n}\label{eq:c_cos}
\end{equation}

Note, the reason \eqref{eq:sin_split} and \eqref{eq:cos_split} are spelt out is that if both $\sin(\phi)$ and $\cos(\phi)$ are to be computed, it may be preferable to use them. The reason is that~\eqref{eq:sin_split} and \eqref{eq:cos_split} are essentially univariate polynomials whereas~\eqref{eq:sin} and~\eqref{eq:cos} are bivariate polynomials. In addition, apart from the factor $-\tfrac{\smash{\sin^{{}_2}(\psi)}}{\smash{\cos^{{}_2}(\psi)}}$, they are identical. See~\cite{Nilsson2024minimax} for an example of such usage and some more comments about it.

\subsection{Expansions for $\cos(\phi-\psi)$}
Trivially
\begin{equation}
\cos(x) = 1 + \sum_{i=1}^\infty\frac{(-1)^i}{(2i)!}x^{2i}
\end{equation}
Combining with \eqref{eq:phiDiffPow}
\begin{align}
\nonumber&\cos(\phi-\psi)\\
\nonumber&=1\!+\!\sum_{i=1}^\infty\frac{(-1)^i}{(2i)!}(1\!-\!\sin^2(\phi))^i\sin^{2i}(\phi)\!\sum_{n=0}^\infty \!\left(\sum_{k=2i}^\infty\!\left(\sum_{l=\max(n+2i,k)}^{n+k}\!\!\!\!\!\! d^\phi_{n,k,l,2i}e^{2l}\!\right)\!\varrho^k\!\right)\!\sin^{2n}(\psi)\\
\nonumber&=1\!+\!\sum_{i=1}^\infty\sum_{j=0}^i\frac{(-1)^{i+j}}{(2i)!}\binom{i}{j}\sum_{n=0}^\infty\left(\sum_{k=2i}^\infty\left(\sum_{l=\max(n+2i,k)}^{n+k} e^{2l}d^\phi_{n,k,l,2i}\right)\!\varrho^k\!\right)\sin^{2n+2i+2j}(\phi)\\
\raisetag{-9mm}\label{eq:cosDiff}&\quad\quad\quad=1 + \sum_{n=1}^\infty\left(\sum_{k=2}^\infty\left(\sum_{l=\max(n,k)}^{n-1+k} d'_{n,k,l}e^{2l}\right)\varrho^k\right)\sin^{2n}(\phi)
\end{align}
where
\begin{align*}
d'_{n',k',l'}&=\sum_{i=1}^\infty\sum_{j=0}^i\!\frac{(-1)^{i+j}}{(2i)!}\binom{i}{j}\!\sum_{n=0}^\infty\!\left(\sum_{k=2i}^\infty\left(\sum_{l=\max(n+2i,k)}^{n+k}\!\!\!\!\! \delta_{l-l'}d^\phi_{n,k,l,2i}\!\right)\!\delta_{k-k'}\!\right)\!\delta_{n+i+j-n'}\\
&=\sum_{i=1}^{\min(n',\lfloor k'/2 \rfloor)}\sum_{j=0}^{\min(i,n'-i)}\frac{(-1)^{i+j}}{(2i)!}\binom{i}{j}d^\phi_{n'-i-j,k',l',2i}
\end{align*}
The last equalities of the two proceeding equations are based on
\begin{itemize}
\item $\delta_{n+i+j-n'}$, $n\geq 0$ $i\geq 1$ and $0\leq j\leq i$ imply that
\begin{itemize}
\item $n'\geq 1$
\item $n = n'-i-j$
\item $0\leq j\leq n'-i$
\item $1\leq i\leq n'$
\end{itemize}
\item $\delta_{k-k'}$, $i\geq 1$, $k\geq 2i$ and $i\geq 1$ imply that
\begin{itemize}
\item $k'=k$
\item $k'\geq 2$
\item $i\leq \lfloor k'/2\rfloor$
\end{itemize}
\item $\delta_{l-l'}$, $\max(n+2i,k)\leq l\leq n+k$ and $j\leq i$ imply that
\begin{itemize}
\item $l'=l$
\item $\max(n',k') \leq l' \leq n'-1+k'$
\end{itemize}
\end{itemize}

\subsection{Expansions for $(1-e^2\sin^2(\phi))^{1/2}$}
Applying binomial expansion twice
\begin{align*}
(1-e^2\sin^2(\phi))^{1/2}&=1 + \sum_{i=1}^\infty\binom{1/2}{i}(-1)^ie^{2i}\sin^{2i}(\phi)\\
&=1 + \sum_{i=1}^\infty\binom{1/2}{i}(-1)^ie^{2i}\sin^{2i}(\psi)\left(1+\left(\frac{\sin(\phi)}{\sin(\psi)}-1\right)\right)^{2i}\\
&=1 + \sum_{i=1}^\infty\binom{1/2}{i}(-1)^ie^{2i}\sin^{2i}(\psi)\sum_{j=0}^{2i}\binom{2i}{j}\left(\frac{\sin(\phi)}{\sin(\psi)}-1\right)^{j}
\end{align*}
Similar to the series expansion for $(\phi-\psi)^i$~\eqref{eq:phiDiffPow}, the last factor can be expanded with $\max(n+1,k)$ replaced with $\max(n,k)$, i.e.
\begin{align*}
\left(\frac{\sin(\phi)}{\sin(\psi)}-1\right)^j&=\left(\sum_{n=0}^\infty \left(\sum_{k=1}^\infty\left(\sum_{l=\max(n,k)}^{n+k} d^{\sin}_{n,k,l}e^{2l}\right)\varrho^k\right)\sin^{2n}(\psi)\right)^{j}\\
&=\sum_{n=0}^\infty \left(\sum_{k=j}^\infty\left(\sum_{l=\max(n,k)}^{n+k} d^{\sin}_{n,k,l,j}e^{2l}\right)\varrho^k\right)\sin^{2n}(\psi)
\end{align*}
where
\begin{align*}
d_{n,k,l,i}^{\sin}&=[e^{2l}][\varrho^k]\left(\rule{0cm}{7mm}A_{n,i}(a_0,\dots,a_n)\left[\rule{0cm}{7mm}a_n^{i_n}\rightarrow\right.\right.\\
&\!\!\!\!\left.\left.\sum_{k=i_n}^\infty \hat{B}_{k,i_n}\left(\sum_{l=\max(1,n)}^{n+1} d^{\sin}_{n,1,l} e^{2l},\dots,\sum_{l=\max(n,k-i_n+1)}^{n+k-i_n+1} d^{\sin}_{n,k-i_n+1,l} e^{2l}\right)\varrho^k\right]\right)
\end{align*}
This in turn gives
\begin{align*}
&(1-e^2\sin^2(\phi))^{1/2}\\
&=\sum_{i=1}^\infty\!\binom{1/2}{i}(-1)^ie^{2i}\sin^{2i}(\psi)\sum_{j=0}^{2i}\!\binom{2i}{j}\sum_{n=0}^\infty \left(\sum_{k=j}^\infty\left(\sum_{l=\max(n,k)}^{n+k}\!\!\!\!\! d^{\sin}_{n,k,l,j}e^{2l}\!\right)\!\varrho^k\right)\!\sin^{2n}(\psi)\\
&=\sum_{i=1}^\infty\binom{1/2}{i}(-1)^i\sum_{j=0}^{2i}\binom{2i}{j}\sum_{n=0}^\infty \left(\sum_{k=j}^\infty\left(\sum_{l=\max(n,k)}^{n+k} d^{\sin}_{n,k,l,j}e^{2(l+i)}\right)\varrho^k\right)\sin^{2(n+i)}(\psi)
\end{align*}
which may be written as a plain triple sum
\begin{equation}
(1-e^2\sin^2(\phi))^{1/2}=\sum_{n=1}^\infty \left(\sum_{k=0}^\infty\left(\sum_{l=\max(n,k+1)}^{n+k} d^{N}_{n,k,l}e^{2l}\right)\varrho^k\right)\sin^{2n}(\psi)\label{eq:N}
\end{equation}
where
\begin{align*}
d^N_{n',k',l'}&=\!\sum_{i=1}^\infty\!\binom{1/2}{i}(-1)^i\sum_{j=0}^{2i}\!\binom{2i}{j}\!\sum_{n=0}^\infty \!\left(\sum_{k=j}^\infty\left(\sum_{l=\max(n,k)}^{n+k}\!\!\!\!\! \delta_{l+i-l'}d^{\sin}_{n,k,l,j}\!\right)\!\delta_{k-k'}\!\right)\!\delta_{n+i-n'}\\
&=\sum_{i=1}^{\min(n',l')}\binom{1/2}{i}(-1)^i\sum_{j=0}^{\min(2i,k')}\binom{2i}{j}d^{\sin}_{n'-i,k',l'-i,j}
\end{align*}
The last equalities of the two proceeding equations are based on
\begin{itemize}
\item $\delta_{n+i-n'}$, $n\geq 0$ and $i\geq1$  imply
\begin{itemize}
\item $n'\geq 1$
\item $1\leq i\leq n'$
\item $n=n'-i$
\end{itemize}
\item $\delta_{k-k'}$ and $k\geq j$ imply
\begin{itemize}
\item $k'=k$
\item $k'\geq 0$
\item $j\leq k'$
\end{itemize}
\item $\delta_{l+i-l'}$, $\max(n,k)\leq l\leq n+k$ and $j\leq 2i$ imply
\begin{itemize}
\item $l=l'-i$
\item $i= l'-l\leq l'$
\item $\max(n'-i,k')\leq l'-i$, i.e. $\max(n',k'+1)\leq l'$
\item $l'-i\leq n'-i+k'$, i.e. $l'\leq n'+k'$
\end{itemize}
\end{itemize}

\subsection{Expansions for $h$}\label{app:h}
Multiplying \eqref{eq:base1} with $\cos(\phi)$ and \eqref{eq:base2} with $\sin(\phi)$, adding, simplifying and solving for $h$ gives 
\begin{equation*}
h=\rho\cos(\phi-\psi)-a(1-e^2\sin^2(\phi))^{1/2}
\end{equation*}
Combining with \eqref{eq:cosDiff} and \eqref{eq:N} directly gives
\begin{equation}
\frac{h + a - \varrho}{a} = \sum_{n=1}^\infty\left(\sum_{k=0}^\infty\left(\sum_{l=\max(n,k+1)}^{n+k} d^h_{n,k,l}e^{2l}\right)\varrho^k\right)\sin^{2n}(\phi)
\end{equation}
where
\begin{equation}
d^h_{n,k,l}=
\begin{cases}
-d^N_{n,k,l} & k=0 \\
d'_{n,k+1,l}-d^N_{n,k,l} & \text{otherwise}
\end{cases}\label{eq:d_h}
\end{equation}
Using~\eqref{eq:sinPowToCosMul}, the Fourier series follows as
\begin{equation}
\frac{h + a - \varrho}{a}=\sum_{n=0}^\infty\left( \sum_{k=0}^\infty\left(\sum_{l=\max(n,k+1)}^\infty c^h_{n,k,l}e^{2l}\right)\varrho^k\right)\cos(2n\psi)
\end{equation}
where
\begin{equation}
c^h_{n,k,l}=\sum_{i=\max(n,l-k)}^l d^h_{i,k,l}\frac{2}{2^{2i+\delta_n}}(-1)^{n}\binom{2i}{i-n}\label{eq:c_h}
\end{equation}


\section{Tabulated coefficients}\label{sec::tabulated_coefficients}
The rational series expansion coefficients, i.e. \eqref{eq:c_phi}, \eqref{eq:d_phi}, \eqref{eq:c_h}, \eqref{eq:d_h}, \eqref{eq:c_sin}, \eqref{eq:d_sin}, \eqref{eq:c_cos}, and \eqref{eq:d_cos}, are tabulated in Table~\ref{tab:c_phi}-\ref{tab:d_cos}. 

\begin{table}
\tiny

\caption{Coefficients $d^{\cos}_{n,k,l}$~\eqref{eq:d_cos} for $\cos(\phi)/\cos(\psi)-1$ in terms of $\sin^{2n}(\psi)$.}
\label{tab:d_cos}
\end{table}

\end{document}